\newtheorem{thm}{Theorem}[section]
\newtheorem{pro}[thm]{Proposition}
\newtheorem{cor}[thm]{Corollary}
\numberwithin{equation}{section}
\theoremstyle{definition}
\newtheorem{example}[thm]{Example}
\newtheorem{df}[thm]{Definition}
\newtheorem{remark}[thm]{Remark}
\newcommand{\Hom}{{\rm Hom}}
\newcommand{\Vect}{{\rm Vect}}
\newcommand{\Rep}{{\rm Rep}}
\newcommand{\BL}{{\rm BL}}
\newcommand{\PP}{\mathbb{P}}
\newcommand{\cO}{\mathcal{O}} 
\newcommand{\scrX}{\mathcal{X}}
\newcommand{\scrY}{\mathcal{Y}}
\newcommand{\scrV}{\mathcal{V}}
\newcommand{\scrF}{\mathcal{F}}
\newcommand{\scrI}{\mathcal{I}}
\newcommand{\cK}{\mathcal{K}}
\newcommand{\Z}{\mathbb{Z}}
\newcommand{\ilim}{\mathop{\varprojlim}\limits}
\begin{document}

\title[Higher dimensional formal orbifolds and orbifold bundles]{Higher dimensional formal 
orbifolds and orbifold bundles in positive characteristic}

\author[I. Biswas]{Indranil Biswas}

\address{School of Mathematics, Tata Institute of Fundamental
Research, Homi Bhabha Road, Bombay 400005, India}

\email{indranil@math.tifr.res.in}

\author[M. Kumar]{Manish Kumar}

\address{Statistics and Mathematics Unit, Indian Statistical Institute,
Bangalore 560059, India}

\email{manish@isibang.ac.in}

\author[A.J. Parameswaran]{A. J. Parameswaran}

\address{School of Mathematics, Tata Institute of Fundamental
Research, Homi Bhabha Road, Bombay 400005, India}

\email{param@math.tifr.res.in}

\subjclass[2010]{14H30, 14J60}

\keywords{Formal orbifolds, Nori fundamental group, essentially finite bundle, Tannakian
category.}

\begin{abstract}
In \cite{KP}, the last two authors introduced formal orbifold curves defined over an
algebraically closed field of positive characteristics. They studied both \'etale and Nori
fundamental group schemes associated to such objects. Our aim here is to study the
higher dimensional analog of these objects objects and their fundamental groups.
\end{abstract}

\maketitle

\section{Introduction}

Given a quasiprojective variety $X$ defined over an algebraically closed field $k$ of positive characteristic,
and a base point $x_0\, \in\, X$, the Nori fundamental group $\pi^N(X,\, x_0)$ is defined using the torsors
on $X$ for finite $k$--group schemes. This construction gives the \'etale fundamental group of $X$ if we
restrict to the reduced group schemes. When $X$ is complete, $\pi^N(X,\, x_0)$ has a Tannakian description
using the essentially finite bundles on $X$ introduced in \cite{No0}. The homomorphisms
between the fundamental groups induced by \'etale morphisms of varieties are well understood. The paper
\cite{KP}, which is a predecessor of the present work, originated from attempts to understand the
homomorphisms between the fundamental groups induced by ramified maps between curves.

We quickly recall the aspects of \cite{KP} that connect it to the present work. Given a finite morphism 
$f\,:\,X\,\longrightarrow\, Y$ between curves, consider all finite morphisms $g\,:\,Z\,\longrightarrow\, Y$ that 
are locally dominated by $f$. This will form an inverse system, and by taking corresponding Galois extensions, 
it is possible define a group obtained by the inverse limits. This is made precise by introducing a branch data 
on $Y$ and a condition in terms of this branch data is imposed on these coverings of $Y$. Those branch data 
coming from a global finite map are referred to as geometrical branch data. In \cite{KP}, a class of bundles on 
those coverings are defined, and it is shown there that they form a tensor abelian category; the Tannakian 
dual of this tensor abelian category is called orbifold fundamental group with respect the orbifold structure 
defined by the branch data.

Here we consider the questions addressed in \cite{KP} in the set-up of higher dimensional 
varieties. We recall the definition of formal orbifolds $(X,\, P)$, where $X$ is a normal proper variety defined 
over $k$, and $P$ is a branch data on $X$ (see Section \ref{sec2}). Associated to $(X,\, P)$
is a tensor abelian category ${\rm Vect}^f(X,P)$ (see \eqref{vx}). It is defined by taking
equivariant essentially finite bundles on suitable ramified Galois coverings $Y$ of $X$ whose
ramifications are are controlled by $P$.

After fixing a base point $x\, \in\, X$ outside $P$, this tensor abelian category
produces a proalgebraic group scheme which is denoted by $\pi^N((X,P),\,x)$. Our first theorem
is the following (see Theorem \ref{ses-thm}):

\begin{thm}\label{in-ses-thm}
Let $f\,:\,(Y,\,O)\,\longrightarrow\,(X,\,P)$ be an \'etale $\Gamma$--Galois cover of 
projective formal orbifolds.
Take a point $y\, \in\, Y$. There is a natural exact sequence
$$1\,\longrightarrow\,\pi^N(Y,\,y)\,\stackrel{i}{\longrightarrow}\, \pi^N((X,P),\, f(y))\,
\stackrel{q}{\longrightarrow}\, \Gamma\,\longrightarrow\, 1\, .$$
\end{thm}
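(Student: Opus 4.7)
The plan is to prove this via Tannakian duality, verifying the exactness on the categorical side. Recall that an exact sequence $1\to H'\to H\to G\to 1$ of affine $k$-group schemes corresponds to: (a) a fully faithful tensor embedding $\Rep(G)\hookrightarrow \Rep(H)$ whose image is closed under subquotients (this gives surjectivity of $H\to G$); (b) a tensor functor $\Rep(H)\to \Rep(H')$ such that every object of $\Rep(H')$ is a subquotient of the restriction of some object of $\Rep(H)$ (this gives that $H'\to H$ is a closed immersion); and (c) the image of $\Rep(G)$ inside $\Rep(H)$ coincides with the full subcategory of objects whose restriction to $H'$ is trivial (this gives $H'=\ker(H\to G)$). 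I would apply this with $\Rep(G)=\Rep_k(\Gamma)$, $\Rep(H)=\Vect^f(X,P)$, and $\Rep(H')=\Vect^f(Y,O)$, with fiber functors at $f(y)$ and $y$ respectively.

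The two functors are the natural ones. The $\Gamma$-Galois \'etale cover $f$ defines a tensor functor $\Rep_k(\Gamma)\longrightarrow \Vect^f(X,P)$ sending $V$ to the $\Gamma$-descent of $\cO_Y\otimes_k V$; this is fully faithful because $f$ is a $\Gamma$-torsor in the orbifold category, yielding the homomorphism $q$ together with condition (a). The \'etale pullback $f^*:\Vect^f(X,P)\to \Vect^f(Y,O)$ is well-defined since \'etaleness of $f$ means $O$ is controlled by the pullback of $P$; it is tensor-exact and dualises to the map $i$. The composition $q\circ i$ is trivial because $f^*$ applied to a descended bundle forgets the $\Gamma$-action; conversely, any $E\in \Vect^f(X,P)$ with $f^*E$ trivial descends along the torsor $f$ to an object of the form $\cO_Y\otimes_k V$, verifying the non-obvious direction of condition (c).

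The remaining condition (b) is the main content of the proof. Given $E\in\Vect^f(Y,O)$, the natural candidate is $V:=f_*E$: the \'etale push-pull formula yields $f^*f_*E\cong \bigoplus_{\gamma\in\Gamma}\gamma^*E$, exhibiting $E$ as a direct summand of $f^*f_*E$. The main obstacle is to ensure that $f_*E$ genuinely belongs to $\Vect^f(X,P)$, and not merely to the ambient category of coherent orbifold sheaves. To establish this, I would start from a Galois cover $\widetilde{Y}\to (Y,O)$ on which $E$ lifts to an equivariant essentially finite bundle, pass to the Galois closure of the composite $\widetilde{Y}\to (X,P)$, and use \'etaleness of $f$ to see that this closure has branch data still controlled by $P$; equivariant essential finiteness of the lift on this larger cover will then certify that $f_*E$ lies in $\Vect^f(X,P)$. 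Naturality of the resulting sequence follows from the compatibility of the Tannakian fiber functors at $y$ and $f(y)$.
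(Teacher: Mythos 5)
Your proposal is correct and follows essentially the same Tannakian route as the paper: the same three categories $\Rep_k(\Gamma)$, $\Vect^f(X,P)$ (i.e.\ $\Gamma$--equivariant essentially finite bundles on $Y$) and $\Vect^f(Y)$, with surjectivity of $q$ from full faithfulness of $V\mapsto Y\times V$ and the closed immersion $i$ from exhibiting each $E$ on $Y$ inside $f^*f_*E\cong\bigoplus_{\gamma\in\Gamma}\gamma^*E$. The only real difference is that the paper works directly with $\bigoplus_{\gamma}\gamma^*E$ equipped with its permutation equivariant structure, which is manifestly an essentially finite object of $\Vect^f(X,P)$, so the "main obstacle'' you raise about certifying $f_*E$ (and the detour through a further cover $\widetilde Y$) dissolves; conversely, you spell out the middle-exactness criterion (trivial restriction to $Y$ implies descent from a $\Gamma$--module) more explicitly than the paper does.
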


Let $X^o$ be an open dense subset of a normal projective variety, we define (\ref{df:new_nori}) a fundamental 
group scheme $\pi^n(X^o,\,x)$ as inverse limit of $\pi^N(X,\,P)$ where the limit is taken over branch data $P$ whose 
branch locus is disjoint from $X^o$. We observe that $\pi^n(X^o,\,x)$ is a quotient of the Nori fundamental group 
$\pi^N(X^o,\,x)$ (Proposition \ref{prohs}). We also show that $\pi^n(X^o,\,x)$ classifies finite group scheme torsors 
over $X^o$ which \'etale locally extends to $X$ (Proposition \ref{pro-characterize-pi^n}).

Our second theorem (Theorem \ref{thm2}) identified the kernel of the natural homomorphism
$$
\pi^N((X,P),\, x)\, \longrightarrow\, \pi_1^{et}((X,P),\,x))\,.$$

\section{Formal orbifold and Orbifold bundles}\label{sec2}

Let $X$ be a normal variety $X$ defined over a perfect field $k$.
We recall from \cite[Section 3]{formal.orbifolds} the definition of a branch data on $X$.
Let $x\,\in \,X$ be a point of codimension at least one, and let $U$ be 
an affine open connected neighborhood of $x$; we note that $U$ is integral
because $X$ is normal. 
Again normality of $X$ implies that the completion $\widehat{\cO_X(U)}^x$ of the coordinate 
ring $\cO_X(U)$ along $x$ is an integral domain. Let $\cK^x_X(U)$ denote the field of fractions 
for $\widehat{\cO_X(U)}^x$. The fraction field of $\widehat \cO_{X,x}$ will be denoted by
$\cK_{X,x}$.

A \emph{quasi branch data} $P$ on $X$ assigns to every such pair $(x,\, U)$
a finite Galois extension of $\cK^x_X(U)$ in a fixed algebraic closure of $\cK^x_X(U)$, which is denoted by $P(x,U)$, such that that
the following compatibility conditions hold:
\begin{enumerate}
\item $P(x_1,\,U)\,=\,P(x_2,\,U)\cK_X^{x_1}(U)$, where $x_1\,\in\, \overline{\{x_2\}}$,
and $U$ is an affine open connected neighborhood of $x_1$ and $x_2$.

\item For $x\,\in\, V\,\subset\, U\,\subset\, X$, with $U$ and $V$ affine open connected subsets, we have
$P(x,V)\,=\,P(x,U)\cK_X^x(V)$.
\end{enumerate}

Define $P(x)\,:=\,P(x,U)\cK_{X,x}$; note that $P(x)$ is independent of the choice of $U$. Also, define
$$\BL(P)\,:=\,\{x\,\in\, X\,\mid\,\widehat\cO_{X,x} ~\text{ is branched in }~ \cK_{X,x} \}\, .$$
A quasi branch data $P$ is called a branch data if $\BL(P)$ is a closed subset
of $X$ of codimension at least one. This $\BL(P)$ is called the branch locus of $P$.

Note that if $\dim X\,=\,1$, then $P(x,\,U)\,=\,P(x)$ (i.e., it is independent of $U$), and hence it
agrees with the notion in \cite{KP}.

The branch data in which all the Galois extensions are trivial is called the trivial branch data, and it is denoted 
by $O$. For a finite morphism $f\,:\,Y\,\longrightarrow\, X$ of normal varieties, the natural 
branch data associated to $f$ will be denoted by $B_f$.

We recall the definition of formal orbifolds from \cite{formal.orbifolds}. As before,
$k$ is a perfect field. A \emph{formal orbifold} 
over $k$ is a pair $(X,\,P)$, where $X$ is a normal finite type scheme over $k$ and $P$ is a branch data on $X$.

A morphism of formal orbifolds $f\,:\,(Y,\,Q)\,\longrightarrow\, (X,\,P)$ is a quasi-finite dominant separable 
morphism $f\,:\, Y\,\longrightarrow\,X$ such that for all points $y\,\in\, Y$ of codimension at least one and some 
affine open neighborhood $U$ of $f(y)$, we have $$Q(y,f^{-1}(U))\,\supset\, P(f(y),U)\, .$$
It is said to be \'etale 
if the extension $Q(y)/P(f(y))$ is unramified, for all $y\,\in\, Y$ of codimension at least one. Moreover, $f$
is called a covering morphism (or simply a covering) if it is also proper.

A formal orbifold $(X,\,P)$ is called \emph{geometric} if there exist an \'etale cover $(Y,\,O)\,
\longrightarrow\, (X,\,P)$ and in this case $P$ is called a geometric branch data \cite{KP}.

Let $(Y,\,O)\,\longrightarrow\, (X,\,P)$ be an \'etale $\Gamma$--Galois covering of formal orbifolds. Like in \cite{KP}, we define 
vector bundles on $(X,\,P)$ as the $\Gamma$--equivariant bundles on $Y$, while morphisms between two
vector bundles on $(X,\,P)$
are defined to be the $\Gamma$--equivariant homomorphisms between the corresponding $\Gamma$--bundles on $Y$. For
the case of curves,
it was shown in \cite{KP} that this definition does not depend on the 
choice of the \'etale cover. The key point is that if $(Y_i,\,O)\,\longrightarrow\, (X,\,P)$ are
\'etale $\Gamma_i$--covers for $i\,=\,1,\,2$, then take
an \'etale $\Gamma$--cover $(Y,\,O)\,\longrightarrow\, (X,\,P)$ that dominates these two covers
(for instance $Y$ can be the 
normalized fiber product of $Y_1$ and $Y_2$). It follows that $Y\,\longrightarrow\,Y_i$ are Galois \'etale covers,
and then using Galois descent it is shown that the pullback functor defines an equivalence of
category of $\Gamma$--bundles of $Y$ and the category of $\Gamma_i$--bundles on $Y_i$. (See \cite[Lemma 3.3, Lemma 3.4, 
Proposition 3.6]{KP} for the proof.) It should be clarified that the proofs of these results in \cite{KP} do not 
use the hypothesis in \cite{KP} that $Y$ is a curve.

Now assume the base field $k$ to be algebraically closed. Let $X$ be a smooth proper variety 
over $k$. A vector bundle on $(X,\,P)$ is called stable (respectively, semi-stable)
if the corresponding $\Gamma$--equivariant bundle on $Y$ is equivariantly 
stable (respectively, equivariantly semistable).
However, an equivariant vector bundle is equivariantly 
semistable if and only if the underlying vector bundle is semistable.
Similarly, a vector 
bundle on $(X,\,P)$ is called essentially finite if the corresponding $\Gamma$--equivariant 
bundle on $Y$ is essentially finite. However, an equivariant vector bundle is equivariantly 

The tensor product and duals of vector bundles on $(X,\,P)$ are defined in the usual way. This 
makes the category
\begin{equation}\label{vx}
{\rm Vect}^f(X,P)
\end{equation}
of essentially finite bundles a Tannakian category, and any closed point $x\,\in\, X$ outside 
support of $P$ defines a fiber functor from $\Vect^f(X,P)$ to the category of $k$--vector 
spaces. Hence we define $\pi^N((X,P),\,x)$ to be the automorphism of this fiber functor. Note 
that if $P$ is the trivial branch data, then $\pi^N((X,P),\,x)$ is the
fundamental group $\pi^N(X,\,x)$ corresponding to the essentially finite bundles
\cite{No0}, \cite{No} (its definition is recalled in Section \ref{se3}.

\section{Basic properties of $\pi^N(X,P)$}\label{se3}

Let $f\,:\,(Y,\,O)\,\longrightarrow\,(X,\,P)$ be an \'etale $\Gamma$--Galois cover of 
projective formal orbifolds.

\begin{thm}\label{ses-thm}
Take a point $y\, \in\, Y$. There is a natural exact sequence
$$1\,\longrightarrow\,\pi^N(Y,\,y)\,\stackrel{i}{\longrightarrow}\, \pi^N((X,P),\, f(y))\,
\stackrel{q}{\longrightarrow}\, \Gamma\,\longrightarrow\, 1\, .$$
\end{thm}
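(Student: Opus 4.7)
The plan is to apply the Tannakian criterion for exactness of a sequence of affine group schemes (Esnault--Hai--Sun, after Nori) to the pair of exact tensor functors
$$\rho\,:\,\Rep(\Gamma)\,\longrightarrow\,\Vect^f(X,P)\,,\qquad \varphi\,:\,\Vect^f(X,P)\,\longrightarrow\,\Vect^f(Y)\,,$$
where $\rho(W)\,:=\,W\otimes_k \cO_Y$ is equipped with the diagonal $\Gamma$-action (combining the representation on $W$ with the natural action on $\cO_Y$ coming from $Y\,\to\,X$), and $\varphi$ is the forgetful functor sending a $\Gamma$-equivariant essentially finite bundle to its underlying bundle on $Y$. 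Using the fiber functor $\omega_y$ on $\Vect^f(Y)$ and $\omega_{f(y)}\,:=\,\omega_y\circ\varphi$ on $\Vect^f(X,P)$, Tannaka duality turns $\rho$ and $\varphi$ into the homomorphisms $q$ and $i$ of the theorem; the composition $q\circ i$ is trivial because $\varphi\circ\rho$ lands in trivial bundles.

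I would then verify three conditions. (i) For $i$ to be a closed embedding, every $V\,\in\,\Vect^f(Y)$ must be a sub-object of some $\varphi(V')$; take $V'\,:=\,\bigoplus_{g\in\Gamma}g^{\ast}V$ with the permutation $\Gamma$-action, which lies in $\Vect^f(X,P)$ (a direct sum of essentially finite bundles equipped with a $\Gamma$-equivariant structure) and contains $V$ as the summand indexed by $g\,=\,e$ of $\varphi(V')$. (ii) For $q$ to be surjective, $\rho$ must be fully faithful with essential image stable under sub-objects. Full faithfulness follows from the identity
$$\Hom_{\Vect^f(X,P)}(\rho(W_1),\,\rho(W_2))\,=\,\bigl(W_1^{\ast}\otimes_k W_2\otimes_k H^0(Y,\cO_Y)\bigr)^\Gamma\,=\,\Hom_\Gamma(W_1,W_2)\,,$$
using projectivity and connectedness of $Y$ so that $H^0(Y,\cO_Y)\,=\,k$. (iii) For middle exactness, $V\,\in\,\Vect^f(X,P)$ must lie in the essential image of $\rho$ iff $\varphi(V)$ is trivial.

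Both the sub-object stability in (ii) and the middle exactness in (iii) rest on the same key observation: if the underlying bundle $\varphi(V)\,\in\,\Vect^f(Y)$ is trivial, say $\varphi(V)\,\cong\,\cO_Y^n$, then $W\,:=\,H^0(Y,V)$ inherits a $\Gamma$-representation structure from $V$, and the $\Gamma$-equivariant evaluation map $W\otimes_k \cO_Y\,\longrightarrow\,V$ is an isomorphism, since it is already one on the underlying bundle. For the sub-object condition, since $\varphi$ is exact, a sub-object $V\,\hookrightarrow\,\rho(W)$ in $\Vect^f(X,P)$ becomes a sub-object of $\cO_Y^n$ in $\Vect^f(Y)$; because the latter corresponds to the trivial representation in $\Rep(\pi^N(Y,y))$ under the Tannakian equivalence, every sub-object of it is itself trivial, and the previous observation then produces the required $W'\,\subset\,W$ with $V\,\cong\,\rho(W')$.

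The main obstacle I anticipate is not a single hard step but the bookkeeping needed to align the fiber functors and base points on the two sides, so that the Tannakian correspondence really outputs the maps $q$ and $i$ of the statement with the correct base point at $f(y)$, and to confirm that $\Vect^f(X,P)$ is genuinely Tannakian with the prescribed fiber functor (both being foundational points from Section~\ref{sec2} and \cite{KP} that must be invoked explicitly). Once that is in hand, the geometric content reduces to the averaging construction $\bigoplus_g g^{\ast}V$ and to projectivity of $Y$ via $H^0(Y,\cO_Y)\,=\,k$.
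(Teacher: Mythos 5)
Your proposal is correct and follows essentially the same route as the paper: the same forgetful functor $\Vect^f(X,P)\to\Vect^f(Y)$ inducing $i$, the same averaging construction $\bigoplus_{\gamma\in\Gamma}\gamma^*V$ to make $i$ a closed immersion, and the same functor $W\mapsto W\otimes_k\cO_Y$ with diagonal $\Gamma$--action inducing the surjection $q$. In fact you supply more detail than the paper on the exactness in the middle (the identification of $\Gamma$--equivariant bundles with trivial underlying bundle as $H^0(Y,V)\otimes_k\cO_Y$), which the paper dispatches in a single sentence.
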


\begin{proof}
Let $E$ be a $\pi^N((X,P), \, y)$--module, meaning it is an essentially finite vector bundle on $(X,\, P)$.
So $E$ is also an essentially finite vector bundle on $Y$. Hence we have a
homomorphism
\begin{equation}\label{ei}
i\, :\, \pi^N(Y)\,\longrightarrow\, \pi^N(X,P)
\end{equation}
(the base point is suppressed).
We note that any essentially finite vector bundle $F$ on $Y$ is a sub-bundle of the 
$\Gamma$--equivariant bundle $\bigoplus_{\gamma\in \Gamma} \gamma^*F$ on $Y$; this direct sum 
$\bigoplus_{\gamma\in \Gamma} \gamma^*F$ is essentially finite because $F$ is so. Consequently, 
$\bigoplus_{\gamma\in \Gamma} \gamma^*F$ is an essentially finite vector bundle on $(X,\, P)$. 
Hence the homomorphism $i$ in \eqref{ei} is a closed immersion \cite[p.~139, Proposition 
2.21(b)]{DM}.

Given a $\Gamma$--module $V$, we have the $\Gamma$--equivariant vector bundle $$Y(V)\ :=\,
Y\times V\, \longrightarrow\, Y\, ;$$
here $\Gamma$ acts diagonally on $Y\times V$ using its actions on $Y$ and $V$. Since $Y(V)$ is
essentially finite, it defines an essentially finite bundle on $(X,\, P)$. This construction produces a homomorphism
$$
q\, :\, \pi^N((X,P))\,\longrightarrow\, \Gamma
$$
(the base point is suppressed).
This $q$ is surjective because the above functor from $\Gamma$--modules to $\Vect^f(X,P)$ (defined in \eqref{vx})
is fully faithful \cite[p.~139, Proposition 2.21(a)]{DM}.

The composition $q\circ i$ is evidently trivial, because the vector bundle underlying the
$\Gamma$--equivariant bundle $Y(V)$ is trivial.

The inclusion homomorphism $\text{kernel}(q)\, \hookrightarrow\, \pi^N((X,P))$ corresponds to the 
forgetful functor that simply forgets the $\Gamma$--action on a $\Gamma$--equivariant vector 
bundle on $Y$. From this it follows that $\text{kernel}(q)\,=\, \text{image}(i)$. This 
completes the proof.
\end{proof}

Let $k$ be an algebraically closed field of positive characteristic. Take a reduced and 
connected $k$--scheme $X$, and fix a rational point $x\, \in\, X$. We recall from \cite{No} the 
construction of a profinite group-scheme over $k$ associated to the pair $(X,\, x)$. Consider 
all quadruples of the form $(G,\, Y,\, f,\, y)$, where
\begin{itemize}
\item $G$ is a finite group-scheme defined over $k$,

\item $f\, :\, Y\, \longrightarrow\, X$ is a $G$--torsor, and

\item $y\, \in\, Y$ is a rational point such that $f(y) \,=\, x$.
\end{itemize}
A morphism $(G,\, Y,\, f,\, y)\, \longrightarrow\, (G',\, Y',\, f',\, y')$
between two such quadruples is a pair of the form $(\rho,\, \varphi)$, where
$\rho\, :\, G\,\longrightarrow\, G'$ is a homomorphism of $k$--group-schemes
and $\varphi\, :\, Y\, \longrightarrow\, Y'$ is a morphism, such that
\begin{itemize}
\item $f'\circ\varphi\,=\, f$,

\item $\varphi(y)\,=\, y'$,

\item the morphism $\varphi$ is $G$--equivariant, for the action of $G$ on the
$G'$--torsor $Y'$ given by $\rho$.
\end{itemize}
Let $N(X,x)$ denote the category constructed using these quadruples and morphisms between
them.

The category $N(X,x)$ forms an inverse system. Nori proved that the inverse limit
$$
 \underset{{\sf N}(X,x)}{\ilim}G
$$
exists as a profinite group-scheme over $k$ \cite[Chapter 2, Proposition 2]{No}. This inverse
limit will be denoted by $\pi^N(X,\, x)$. When $X$ is a projective variety, this profinite
group-scheme $\pi^N(X,\, x)$ coincides with the Tannaka dual of the category of essentially
finite vector bundles on $X$ \cite[Chapter 1, Proposition 3.11]{No}.

\begin{df}\label{df:torsor-extends}
Let $X^o$ be an open dense subset of a normal projective variety $X$. Let $G$ be a finite group
scheme, and let $$Z^o\,\longrightarrow\, X^o$$ be a $G$--torsor.
We say that this $G$--torsor $Z^o$ \'etale locally extends to $X$ if there exist a connected \'etale 
cover $$\phi\, :\, U\,\longrightarrow\, X^o$$ such that the $G$--torsor $\phi^*Z^o$ extends to the 
normalization of $X$ in the function field $k(U)$.
\end{df}

\begin{df}\label{df:new_nori}
Let $ X^o \, \subset\, X$ be a dense open subset.
Define $$\pi^n(X^o,\, x)\,:=\,\varprojlim_{\BL(P)\cap X^o=\emptyset} \pi^N((X,P),\, x)$$
to be the inverse limit.
\end{df}

\begin{pro}\label{prohs}
As before, $X^o\, \subset\, X$ is a dense open subset.
There is a natural homomorphism $\pi^N(X^o,\, x) \,\longrightarrow\, \pi^n(X^o,\, x)$, which
is surjective.
\end{pro}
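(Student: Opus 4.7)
My plan is to construct the map via restriction of Nori torsors from the formal orbifold $(X,P)$ to the open subset $X^o$, and then to prove surjectivity by checking it on each finite quotient of the inverse system defining $\pi^n(X^o,\,x)$. For each branch data $P$ with $\BL(P)\cap X^o\,=\,\emptyset$, I would construct a homomorphism $\pi^N(X^o,\,x)\,\longrightarrow\,\pi^N((X,P),\,x)$ as follows. A finite quotient $\pi^N((X,P),\,x)\,\twoheadrightarrow\,G$ corresponds, via Tannakian duality applied to $\Vect^f(X,P)$, to a pointed connected $G$-torsor on $(X,\,P)$. Concretely, taking an \'etale $\Gamma$-Galois cover $(Y,\,O)\,\longrightarrow\,(X,\,P)$ that trivializes the orbifold structure, such a torsor is realized by a $\Gamma$-equivariant classical $G$-torsor $W\,\longrightarrow\,Y$; its geometric incarnation on $X$ is the finite cover $Z\,:=\,W/\Gamma\,\longrightarrow\,X$, which is normal (as a finite group quotient of the normal $W$), connected (by surjectivity onto $G$), and whose ramification is dominated by $P$. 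Because $\BL(P)\cap X^o\,=\,\emptyset$, the restriction $Z^o\,:=\,Z\times_X X^o\,\longrightarrow\,X^o$ is an ordinary \'etale $G$-torsor, pointed by the image of the chosen point, and hence an object of the Nori system $N(X^o,\,x)$. This produces compatible homomorphisms $\pi^N(X^o,\,x)\,\to\,G$ and, letting $G$ and $P$ vary, the required natural map $\pi^N(X^o,\,x)\,\longrightarrow\,\pi^n(X^o,\,x)$.

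Surjectivity can be verified one level at a time. The image of any morphism into the limit $\pi^n(X^o,\,x)\,=\,\varprojlim_P\pi^N((X,P),\,x)$ is a closed subgroup scheme, which must coincide with the whole limit as soon as it surjects onto every term of the system. Thus it suffices to show that for each $P$ and each finite quotient $\pi^N((X,P),\,x)\,\twoheadrightarrow\,G$ the composition $\pi^N(X^o,\,x)\,\to\,G$ is surjective. But this composition classifies exactly the torsor $Z^o\,\to\,X^o$ constructed above, and surjectivity onto $G$ is equivalent to $Z^o$ being connected. Since $Z$ is normal and connected it is irreducible, so the dense open $Z^o\,\subset\,Z$ is irreducible, and in particular connected; surjectivity follows.

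The step requiring the most care is the geometric Tannakian translation just sketched: one must unpack the definition of $\pi^N((X,P),\,x)$ through the equivalence $\Vect^f(X,P)\,\simeq\,\Gamma\text{-}\Vect^f(Y)$ to exhibit, from each finite quotient, a pointed normal connected cover $Z\,\to\,X$ whose restriction over $X^o$ is genuinely a pointed \'etale $G$-torsor. The \'etaleness over $X^o$ is automatic from $\BL(P)\cap X^o\,=\,\emptyset$ together with the formal-orbifold definition of \'etale cover, while preservation of connectedness under restriction to the dense open $Z^o\,\subset\,Z$ is the normality argument just recalled.
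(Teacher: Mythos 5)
Your construction of the homomorphism is essentially the paper's: you pass to an \'etale $\Gamma$--Galois cover $(Y,O)\to(X,P)$, realize a finite quotient $\pi^N((X,P))\twoheadrightarrow G$ as a $\Gamma$--equivariant $G$--torsor $W\to Y$, and descend its restriction over $Y^o=f^{-1}(X^o)$ along the \'etale $\Gamma$--cover $Y^o\to X^o$ (your $Z\times_X X^o=W^o/\Gamma$ is exactly this descent). That part is fine and agrees with the paper's proof.

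The gap is in your surjectivity argument. You assert that surjectivity of $\pi^N(X^o,x)\to G$ is equivalent to connectedness of $Z^o$, and you deduce connectedness from normality of $W$ (hence of $Z=W/\Gamma$). Both claims fail for finite group schemes $G$ that are not \'etale, which is precisely the case the Nori fundamental group scheme is designed to capture in positive characteristic. A torsor under an infinitesimal group scheme such as $\alpha_p$ or $\mu_p$ is a radicial cover, so its total space is homeomorphic to the base and in particular always connected, irrespective of whether the classifying homomorphism is surjective: the trivial $\mu_p$--torsor is connected but classifies the trivial homomorphism. Likewise the total space of an $\alpha_p$--torsor over a normal base is in general not normal (e.g.\ $t^p=x^2$ over $\mathbb{A}^1$ for $p>2$), so the quotient $Z$ need not be normal either. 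The correct criterion is that the homomorphism is surjective if and only if the pointed torsor admits no reduction of structure group to a proper subgroup scheme through the chosen point; to salvage your term-by-term strategy you would need to show that a reduction of $Z^o\to X^o$ to some $H\subsetneq G$ pulls back to a reduction of $W^o$ over $Y^o$, extends over the normal proper $Y$, and thereby contradicts the surjectivity of $\pi^N((X,P))\to G$. As written, your argument only establishes surjectivity onto the \'etale (constant) quotients, i.e.\ it proves surjectivity onto $\pi_1^{et}$ rather than onto $\pi^n(X^o,x)$.
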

 
\begin{proof}
Let $G$ be a finite group scheme and $f\,:\,\pi^n(X^o)\,\longrightarrow\, G$ a surjection. Then 
$f$ defines a functor $\Rep(G)\,\longrightarrow\,\Rep(\pi^N(X,P))$ for \textit{some} geometric branch 
data $P$ on $X$ such that $\BL(P)\bigcap X^o
\,=\,\emptyset$. But $\Rep(\pi^N((X,\,P)))$ is same as $\Vect^f(X,\,P)$ which is same as 
$\Vect_{\Gamma}^f(Y)$, where $$f\,:\,(Y,\,O)\,\longrightarrow\,(X,\,P)$$ is an \'etale 
$\Gamma$--cover. By Nori's result a functor $\Rep(G)\,\longrightarrow\, \Vect_{\Gamma}^f(Y)$ 
defines a $G$--torsor $W$ on $Y$ which is $\Gamma$ equivariant. 
Set $Y^o\,:=\,f^{-1}(X^o)\, .$ and
let $W^o$ be the preimage of $Y^o$ in $W$. This $W^o$ is a $G$--torsor on $Y^o$ which is 
$\Gamma$--equivariant. But $Y^o\,\longrightarrow\, X^o$ is an \'etale $\Gamma$--cover, and
hence $W^o$ descents to a $G$--torsor on $X^o$. Therefore, it defines a surjection $\pi^N(X^o)
\,\longrightarrow\, G$. This construction is compatible with epimorphism of finite group schemes,
and $\pi^n(X^o)$ is the inverse limit of its finite group scheme quotients. Consequently,
this construction gives a surjection from $\pi^N(X^o)\,\longrightarrow\, \pi^n(X^o)$. 
\end{proof}
 
\begin{pro}\label{pro-characterize-pi^n}
Let $G$ be a finite group scheme, and let $Z^o\,\longrightarrow\, X^o$ be a $G$--torsor which \'etale
locally extends to $X$. Then $G$ is a quotient of $\pi^n(X^o)$. Conversely, given a surjection
$\pi^n((X,P))\,\longrightarrow\, G$, where $P$ is such that $\BL(P)\bigcap X^o\,=\,\emptyset$ and $G$ is a finite group scheme, the associated $G$--torsor
$Z^o\,\longrightarrow\, X^o$ \'etale locally extends to $X$. 
\end{pro}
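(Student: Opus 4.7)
The plan is to use the correspondence, already exploited in the proof of Proposition \ref{prohs}, between surjections $\pi^N((X,P)) \twoheadrightarrow G$ and $\Gamma$-equivariant $G$-torsors on étale $\Gamma$-Galois covers $(Y,O) \to (X,P)$, combined with a normality argument that transports the equivariant structure from the open complement of $\BL(P)$ across the branch divisor. For the forward direction, suppose $\phi \,:\, U \to X^o$ is a connected étale cover such that $\phi^* Z^o$ extends to a $G$-torsor $W_V$ on $V$, the normalization of $X$ in $k(U)$. Since $\phi$ is étale, $k(U)/k(X)$ is separable, so its Galois closure $L$ exists; let $Y$ be the normalization of $X$ in $L$ and set $\Gamma = \mathrm{Gal}(L/k(X))$. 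Then $Y \to X$ is $\Gamma$-Galois and étale over $X^o$, so taking $P = B_{Y/X}$ yields $\BL(P) \cap X^o = \emptyset$ together with an étale $\Gamma$-Galois cover $(Y,O) \to (X,P)$. The universal property of normalization furnishes a morphism $Y \to V$; pulling back $W_V$ along it produces a $G$-torsor $W_Y$ on $Y$.

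The crucial step is to equip $W_Y$ with a $\Gamma$-equivariant structure. Over $Y^o := Y \times_X X^o$, the restriction $W_Y|_{Y^o}$ is canonically the pullback of $Z^o$ along the étale $\Gamma$-torsor $Y^o \to X^o$, and therefore carries a tautological $\Gamma$-linearization. For each $\gamma \in \Gamma$, the isomorphism $\gamma^*(W_Y|_{Y^o}) \to W_Y|_{Y^o}$ extends uniquely to an isomorphism $\gamma^* W_Y \to W_Y$ because $Y$ is normal and $W_Y \to Y$ is finite étale (hence $W_Y$ is itself normal), and the cocycle identity persists by the same uniqueness. This $\Gamma$-equivariant $G$-torsor on $Y$ is, by definition, a $G$-torsor on $(X,P)$, and just as at the end of the proof of Proposition \ref{prohs} it yields a surjection $\pi^N((X,P)) \twoheadrightarrow G$, which by Definition \ref{df:new_nori} factors through $\pi^n(X^o)$.

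For the converse, a surjection $\pi^N((X,P)) \twoheadrightarrow G$ with $\BL(P) \cap X^o = \emptyset$ corresponds, as in Proposition \ref{prohs}, to a $\Gamma$-equivariant $G$-torsor $W$ on an étale $\Gamma$-Galois cover $(Y,O) \to (X,P)$, and the associated $Z^o$ is the descent of $W|_{Y^o}$ along $Y^o \to X^o$. Choose any connected component $Y_1$ of $Y$ and set $U := Y_1 \cap Y^o$; since $Y_1$ is normal and connected, it is irreducible, so $U$ is connected. The induced map $\phi \,:\, U \to X^o$ is thus a connected étale cover, $\phi^* Z^o$ coincides with $W|_U$, and $W|_{Y_1}$ is its extension to $Y_1$. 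Because $Y_1$ is integral, normal, and finite over $X$ with $k(Y_1) = k(U)$, it is the normalization of $X$ in $k(U)$, and so $Z^o$ étale-locally extends to $X$ in the sense of Definition \ref{df:torsor-extends}.

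The hardest step is the equivariance-extension argument in the forward direction: one must verify that the $\Gamma$-action visible on $W_Y|_{Y^o}$ genuinely lifts to all of $W_Y$. This reduces to the general fact that a finite étale cover of a normal scheme is normal, so that an isomorphism of $G$-torsors over a dense open subset extends uniquely to the ambient cover. Once this is accepted, the remainder is routine bookkeeping with Galois closures, the construction of $B_{Y/X}$, and the Tannakian correspondence already in place earlier in the paper.
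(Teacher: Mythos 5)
Your overall route is the same as the paper's: pass to the Galois closure $Y$ of the given \'etale cover, take $P\,=\,B_f$, produce a $\Gamma$--equivariant $G$--torsor on $Y$, and read off the surjection via the Tannakian correspondence; for the converse, restrict the equivariant torsor to $Y^o$ and descend along the \'etale $\Gamma$--cover $Y^o\to X^o$. The converse direction is fine. However, the justification you give for the step you yourself single out as the hardest --- extending the tautological $\Gamma$--linearization of $W_Y|_{Y^o}$ to all of $W_Y$ --- does not work. You argue that $W_Y\to Y$ is finite \'etale, hence $W_Y$ is normal, hence each isomorphism $\gamma^*(W_Y|_{Y^o})\to W_Y|_{Y^o}$ extends. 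But $G$ is an arbitrary finite $k$--group scheme in characteristic $p$: for non-reduced $G$ (e.g. $\mu_p$ or $\alpha_p$) a $G$--torsor is finite and faithfully flat but \emph{not} \'etale, and its total space need not be normal (the $\alpha_p$--torsor $t^p\,=\,x^{p+1}+x$ over $\mathbb{A}^1$ is a singular, hence non-normal, curve). These non-\'etale group schemes are precisely the ones for which $\pi^N$ differs from $\pi_1^{et}$, so the case your reduction actually covers is the degenerate one.

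The extension claim is nevertheless true, but the correct input is normality of $Y$, not of $W_Y$: the scheme $\mathrm{Isom}_G(\gamma^*W_Y,\,W_Y)$ is finite over $Y$, the given isomorphism over $Y^o$ is a section of it over $Y^o$, and the scheme-theoretic closure of the image of that section is integral, finite and birational onto the normal integral scheme $Y$, hence maps isomorphically onto $Y$; this produces the extension, its uniqueness (two sections of a separated $Y$--scheme agreeing on a dense open of a reduced $Y$ agree) gives the cocycle identity. The paper itself asserts the equivariance of the extension without proof, so you were right to isolate this point; but as written your argument for it would fail exactly where the proposition has content, and should be replaced by the finite-plus-birational-over-normal argument above.
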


\begin{proof}
Let $Y^o\,\longrightarrow\, X^o$ be a connected \'etale cover such that the pullback of the 
$G$--torsor $Z^o$ to $Y^o$ extends to the normalization of $X$ in $k(Y^o)$ (the unique normal proper model of $Y^o$ finite over $X$). By passing to the Galois closure, we may assume that 
$f\,:\,Y\,\longrightarrow\, X$ is a Galois cover; the Galois group for $f$ will be denoted by 
$\Gamma$. Let $P$ be the branch data on $X$ associated to $f$, i.e., $P\,=\,B_f$ in the 
notation of \cite{KP}. Then $f\,:\,(Y,\,O)\,\longrightarrow\, (X,\,P)$ is an \'etale 
$\Gamma$--cover. Also, the pull back of the $G$--torsor $Z^o\,\longrightarrow\, X^o$ to $Y^o$ 
and its extension to $Y$ is a $\Gamma$--equivariant $G$--torsor. Now a representation $V$ of 
$G$ induces an essentially finite $\Gamma$--equivariant bundle $\scrV$ on $Y$. The Tannaka 
subcategory generated by $\scrV$ in the Tannaka category of $\Gamma$--equivariant essentially 
finite bundles on $Y$ induces a surjection $\pi^N((X,P))\,\longrightarrow\, G$. Hence we get a 
surjection $\pi^n(X^o)\,\longrightarrow\,G$.

For the converse, first note that since $G$ is a finite group scheme, a surjection $\pi^n(X^o)$ 
factors through $\pi^N(X,P)$ for some branch data $P$ such that $$BL(P)\bigcap X^o\,=\,\emptyset\, .$$ Let 
$$f\,:\,(Y,\,O)\,\longrightarrow\, (X,\,P)$$ be an \'etale $\Gamma$--Galois cover of formal 
orbifolds. The surjection $\pi^N((X,P))\,\longrightarrow\,G$ by Tannaka formalism yields a finite 
collection $S$ of essentially finite $\Gamma$--equivariant bundle on $Y$ such that the Tannaka 
dual of the Tannaka subcategory generated by $S$ is $G$. This by an equivariant version of Nori's 
reconstruction, \cite[Section 2]{BDP}, yields a $\Gamma$--equivariant $G$--torsor on $Y$. This 
torsor restricts to a $\Gamma$--equivariant $G$--torsor on $$Y^o\,=\,f^{-1}(X^o)\, .$$ But $Y^o 
\,\longrightarrow\,X^o$ is an \'etale $\Gamma$ cover. Hence by Galois descent we get a 
$G$--torsor on $X^o$ and by construction it \'etale locally extends to $X$.
\end{proof}

Let $P$ and $Q$ be two branch data on a normal variety $X$.
We say that $P\, \ge \, Q$ 
if for all points $x\,\in\, X$ of codimension at least one and for every affine connected open 
neighborhood $U$ of $x$, $$P(x,\,U)\, \supset \, Q(x,\,U)\, .$$

\begin{pro}
Let $X$ be a smooth projective variety over $k$, and let $P\,\ge\, Q$ be two geometric branch 
data on $X$. There is a fully faithful functor $${\rm Vect}^f(X,Q)\,\longrightarrow\,{\rm Vect}^f(X,P)$$ 
that makes $\Vect^f(X,Q)$ into a Tannakian subcategory of $\Vect^f(X,P)$. In particular, this 
functor induces an epimorphism $\pi^N((X,P))\,\longrightarrow\, \pi^N((X,Q))$.
\end{pro}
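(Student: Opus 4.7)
The plan is to realise both categories as categories of equivariant essentially finite bundles on a single common \'etale Galois cover, and then interpret the desired functor as restriction of equivariance along a quotient of finite Galois groups.

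First I would use geometricity of $P$ and $Q$ to choose \'etale Galois covers $(Y_P,\,O)\longrightarrow (X,\,P)$ and $(Y_Q,\,O)\longrightarrow (X,\,Q)$ with groups $\Gamma_P$ and $\Gamma_Q$, and form the normalised fibre product $Y:=\overline{Y_P\times_X Y_Q}$. This $Y$ is a Galois cover of $X$ with group $\Gamma$ fitting into surjections $\Gamma\twoheadrightarrow\Gamma_P$ and $\pi\colon\Gamma\twoheadrightarrow\Gamma_Q$; write $N:=\ker\pi$. The key observation is that, since $P\ge Q$, the ramification of $Y_Q\longrightarrow X$ at every codimension-one point is already absorbed by the ramification allowed by $P$, so after normalisation the projection $Y\longrightarrow Y_P$ is \'etale. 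Consequently $(Y,\,O)\longrightarrow(X,\,P)$ is again an \'etale $\Gamma$-Galois covering of formal orbifolds. Verifying this simultaneous \'etaleness at each codimension-one point, which is essentially an instance of Abhyankar's lemma in the formal-orbifold setting, is, I expect, the main technical obstacle, as it rests on the precise local Galois-theoretic interaction of $P$ with $Q$ in positive characteristic.

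Given such a $Y$, the independence-of-cover statement recalled in Section~\ref{sec2} yields an equivalence $\Vect^f(X,P)\cong\Vect^f_\Gamma(Y)$. The desired functor is defined by sending a $\Gamma_Q$-equivariant essentially finite bundle $E$ on $Y_Q$ to its pullback $b^*E$ along the projection $b\colon Y\longrightarrow Y_Q$, equipped with the $\Gamma$-equivariant structure induced from $E$ via $\pi$. Essential finiteness is preserved under pullback by a finite morphism, and the assignment manifestly commutes with tensor products, duals, and direct sums.

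Full faithfulness then follows from Galois descent in the presence of possible ramification. A $\Gamma$-equivariant morphism between two such pullbacks is automatically $N$-invariant; since $Y\longrightarrow Y_Q$ is an (in general ramified) $N$-Galois cover of normal varieties, the $N$-invariants of the pushforward $b_*b^*\underline{\Hom}(E_1,E_2)$ recover $\underline{\Hom}(E_1,E_2)$ on $Y_Q$, and the further $\Gamma_Q$-equivariance is read off from the induced $\Gamma/N\cong\Gamma_Q$-action. The essential image is closed under subobjects by the same descent argument applied to $\Gamma$-equivariant sub-bundles, and under tensor products and duals by construction. This exhibits $\Vect^f(X,Q)$ as a Tannakian subcategory of $\Vect^f(X,P)$, and the corresponding epimorphism $\pi^N((X,P))\twoheadrightarrow\pi^N((X,Q))$ is then a direct application of \cite[Proposition~2.21(a)]{DM}.
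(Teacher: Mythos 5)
The paper does not actually give an argument here: it defers entirely to \cite[Theorem 3.7]{KP}, remarking only that the proof there never uses the curve hypothesis. Your proposal should therefore be judged as a reconstruction of that argument, and its architecture --- pass to a common Galois cover $Y$ dominating $Y_P$ and $Y_Q$, identify $\Vect^f(X,P)$ with $\Gamma$-equivariant essentially finite bundles on $Y$, define the functor as $b^*$ along $b\colon Y\to Y_Q$ with equivariance induced via $\Gamma\twoheadrightarrow\Gamma_Q$, and conclude via \cite[Proposition 2.21(a)]{DM} --- is the natural one and is surely the route the citation has in mind. One small correction: the \'etaleness of $(Y,O)\to (Y_P,O)$ is not ``an instance of Abhyankar's lemma,'' and it is not the main technical obstacle. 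Abhyankar's lemma concerns tame ramification, which is exactly what is unavailable in the wild setting formal orbifolds are built for. What you actually need is softer: at a codimension-one point the completed local field of $Y$ is the compositum $\cK_{Y_P,y}\cdot\cK_{Y_Q,y'}$; the extension $\cK_{Y_Q,y'}/Q(x)$ is unramified because $(Y_Q,O)\to(X,Q)$ is orbifold-\'etale, and $\cK_{Y_P,y}\supset P(x)\supset Q(x)$ since $P\ge Q$, so the compositum is unramified over $\cK_{Y_P,y}$ because unramified extensions are stable under arbitrary base change. The conclusion is right, but for a different reason than the one you give.

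The genuine gap is the closure of the essential image under subobjects, which is the second hypothesis of \cite[Proposition 2.21(a)]{DM} and is indispensable for the claimed epimorphism; you assert it follows ``by the same descent argument,'' but that argument does not deliver it. For the ramified cover $b\colon Y\to Y_Q$, taking $N$-invariants of $b_*$ inverts $b^*$ on objects pulled back from $Y_Q$ (this is why your full-faithfulness computation via $(b_*\cO_Y)^N=\cO_{Y_Q}$ works), but it is not an inverse on arbitrary $N$-equivariant subsheaves: a $\Gamma$-equivariant subbundle $F\subset b^*E$ may carry nontrivial actions of the inertia groups over the ramification locus of $b$, in which case the natural map $b^*\bigl((b_*F)^N\bigr)\to F$ fails to be an isomorphism along that locus --- this failure is precisely the phenomenon that produces parabolic structures under ramified descent. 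To rule it out one must use that $F$ is a subobject in the category of essentially finite bundles (degree zero, strongly semistable, with essentially finite quotient), which is the actual content of \cite[Theorem 3.7]{KP}; your sketch does not supply this step, and without it the epimorphism $\pi^N((X,P))\twoheadrightarrow\pi^N((X,Q))$ does not follow.
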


\begin{proof}
This is proved in \cite[Theorem 3.7]{KP}. We note that although \cite[Theorem 3.7]{KP} is stated
for curves, its proof works, without any change, for all dimensions.
\end{proof}

\section{The kernel of projection from $\pi^N((X,P))$ to $\pi_1^{et}((X,P))$}

Let $\scrX\,=\,(X,\,P)$ be a proper formal orbifold, and let $\Vect^f(\scrX)$ be the Tannakian category of 
essentially finite vector bundles on $\scrX$. We now define a new category $\Vect^f_{et}(\scrX)$.

An object of this category is a pair $\{f:(Y,Q)\to (X,P),\, V\}$, where $f$ is \'etale and $V$ 
is an object of $\Vect^f(Y,Q)$ (i.e., $V$ is an essentially finite vector bundle on $(Y,Q)$). 
Let $\{f_i:(Y_i,Q_i)\to (X,P),\,V_i\}$, $i\,=\,1,\,2$, be two objects, and let 
$$f\,:\,(Y,\,Q)\,\longrightarrow\, (X,\,P)$$
be any \'etale morphism dominating $f_1$ and $f_2$; let $g_i\,:\,Y\,\longrightarrow\, Y_i$ be 
the morphisms through which $f$ factors. Define 
$$\Hom((f_1,V_1),\,(f_2,V_2))\,:=\,\varinjlim_{f:(Y,Q)\to (X,P)} {\rm Hom}_{{\rm 
Vect}^f(Y,Q)}(g_1^*V_1,g_2^*V_2)\, ;$$
here the limit is over all \'etale morphisms $f$ dominating $f_1$ and $f_2$. Since 
$\Vect^f(Y,\,Q)$ is an abelian category for any proper formal orbifold $(Y,\,Q)$, the category 
$\Vect^f_{et}(\scrX)$ is also abelian.

The tensor product $(f_1,\,V_1)\otimes(f_2,\,V_2)$ is defined as follows: let 
$f\,:\,(Y,\,Q)\,\longrightarrow\, (X,\,P)$ be a dominating connected component of the fiber 
product of $f_1$ and $f_2$, and let $p_1$ and $p_2$ be the natural projection morphisms from 
this fiber product. Then
$$(f_1,\,V_1)\otimes(f_2,\,V_2)\,=\,(f,\,p_1^*V_1\otimes p_2^*V_2)\, .$$
The dual of $(f_1,\,V_1)$ is $(f_1,\, V^\vee_1)$. So
$\Vect^f_{et}(\scrX)$ is a rigid tensor abelian category.

Let $x$ be a closed point of the complement $X\setminus \BL(P)$. Let $\widetilde x$ be a point 
of the universal cover $\widetilde \scrX$ of $\scrX$; this means that for every finite \'etale 
connected cover $(Y,\,Q) \,\longrightarrow\,(X,\,P)$ we choose a point in $Y$ over $x$ in a 
compatible way. The point $\widetilde x$ defines a fiber functor $\scrF_{\tilde x,\scrX}$ from 
$\Vect^f_{et}(\scrX)$ to the category of vector spaces $\Vect_k$ by sending $\{f:(Y,Q)\to 
(X,P), V\}$ to the stalk of $V$ at the image of $\widetilde x$ in $Y$. This makes 
$\Vect^f_{et}(\scrX)$ into a Tannakian category. Let corresponding proalgebraic group scheme
will be denoted by $S(X,\, P)$.

\begin{thm}\label{thm2}
Let $\scrX\,=\,(X,\,P)$ be a projective smooth formal orbifold. The dual group of the
Tannakian category 
$(\Vect^f_{et}(X,P),\, \scrF_{\widetilde{x},\scrX})$ is the kernel $$K(X,P)\,:=\,{\rm kernel}(\pi^N(\scrX,\,x)
\,\rightarrow\, \pi_1^{et}(\scrX,\,x))\, .$$
\end{thm}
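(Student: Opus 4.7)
The plan is to describe $\Vect^f_{et}(\scrX)$ as a filtered direct limit of the categories $\Vect^f(Y)$ indexed by étale Galois covers $(Y,O)\to(X,P)$; Tannakian duality then expresses $S(X,P)$ as an inverse limit of the $\pi^N(Y)$, and one identifies this with $K(X,P)$ using the short exact sequences from Theorem \ref{ses-thm}.

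\emph{Step 1: Direct-limit presentation.} I would first show that every object $\{f:(Y,Q)\to(X,P),\,V\}$ of $\Vect^f_{et}(\scrX)$ is isomorphic, in the direct-limit Hom, to one with trivial branch data upstairs: since $(Y,Q)$ is geometric, $V$ is represented by a $\Gamma'$-equivariant essentially finite bundle $\widetilde V$ on some étale Galois cover $(Y',O)\to(Y,Q)$, so the composite $g:(Y',O)\to(X,P)$ is étale, $\widetilde V\in\Vect^f(Y',O)$, and the identity $g^*V=\widetilde V$ provides an isomorphism $\{f,V\}\cong\{g,\widetilde V\}$. Replacing $Y'$ by its Galois closure over $X$ yields
$$\Vect^f_{et}(\scrX)\;\simeq\;\varinjlim_{Y}\Vect^f(Y),$$
where $Y$ runs over étale Galois covers of $(X,P)$ with pullbacks as transition functors. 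The compatible system $\widetilde x$ selects points $\widetilde x_Y\in Y$, giving compatible fiber functors on the $\Vect^f(Y)$ which assemble into $\scrF_{\widetilde x,\scrX}$. The duality between filtered direct limits of Tannakian categories with compatible fiber functors and cofiltered inverse limits of their dual group schemes then yields
$$S(X,P)\;=\;\varprojlim_{Y}\pi^N(Y,\widetilde x_Y).$$

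\emph{Step 2: Identification with the kernel.} Theorem \ref{ses-thm} applied to each $(Y,O)\to(X,P)$ with group $\Gamma_Y$ gives an exact sequence
$$1\,\longrightarrow\,\pi^N(Y)\,\stackrel{i_Y}{\longrightarrow}\,\pi^N(\scrX,x)\,\longrightarrow\,\Gamma_Y\,\longrightarrow\,1,$$
and these sequences are functorial in $Y$, so the images $i_Y(\pi^N(Y))$ form a decreasing family of closed subgroups of $\pi^N(\scrX,x)$ with quotient tower $\{\Gamma_Y\}$. Commuting the inverse limit with the kernel and using that $\pi_1^{et}(\scrX,x)=\varprojlim_Y\Gamma_Y$ by definition of the étale fundamental group of $(X,P)$, I obtain
$$S(X,P)\,=\,\varprojlim_Y\pi^N(Y)\,=\,\ker\bigl(\pi^N(\scrX,x)\longrightarrow\varprojlim_Y\Gamma_Y\bigr)\,=\,\ker\bigl(\pi^N(\scrX,x)\longrightarrow\pi_1^{et}(\scrX,x)\bigr)\,=\,K(X,P).$$

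The main obstacle is Step 1, namely verifying that the natural functor $\varinjlim_Y\Vect^f(Y)\to\Vect^f_{et}(\scrX)$ is an equivalence of rigid tensor categories compatible with $\scrF_{\widetilde x,\scrX}$. Essential surjectivity follows from the reduction above; the nontrivial content is full faithfulness of the pullback $\Vect^f(Y_1)\to\Vect^f(Y_2)$ along an étale Galois cover $Y_2\to Y_1$, which rests on Galois descent for essentially finite bundles in the spirit of \cite[Lemma 3.3, Proposition 3.6]{KP}. Once this is in place, the identification is a formal consequence of Theorem \ref{ses-thm} together with commutation of inverse limits with kernels.
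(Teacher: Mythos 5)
Your proposal is essentially correct but takes a genuinely different route from the paper. The paper argues directly with the comparison homomorphism: for each finite \'etale cover $\scrY\to\scrX$ the full embedding $\Vect^f(\scrY)\to\Vect^f_{et}(\scrX)$ gives $S(X,P)\to\pi^N(\scrY,y)$, whence the image of $S(X,P)$ in $\pi^N(\scrX,x)$ lands in $K(X,P)$; injectivity is proved by the subobject trick that every $\{f,V\}$ embeds in $\scrI_{\scrX}(f_*V)$ via $V\hookrightarrow f^*f_*V$, so an automorphism of $\scrF_{\widetilde x,\scrX}$ trivial on $\Vect^f(\scrX)$ is trivial; and surjectivity is proved by extending an automorphism $\Phi\in K(X,P)$ objectwise to $\Vect^f_{et}(\scrX)$. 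Notably, the paper's proof of this theorem does not invoke Theorem \ref{ses-thm} at all, whereas your argument leans on it twice (to identify the image of $i_Y$ with $\ker q_Y$, and to pass to the limit). Your limit formalism is clean and makes the structural reason for the theorem transparent ($S=\varprojlim\pi^N(Y)=\ker(\pi^N(\scrX)\to\varprojlim\Gamma_Y)$), at the cost of needing the cofinality of Galois covers with trivial branch data among all \'etale $(Y,Q)\to(X,P)$, the functoriality of the exact sequences of Theorem \ref{ses-thm} in $Y$, the left exactness of cofiltered limits of affine group schemes, and the identification $\pi_1^{et}(\scrX,x)=\varprojlim_Y\Gamma_Y$; all of these are true but should be said. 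The paper's proof buys a self-contained argument; yours buys a conceptual one that also reproves injectivity for free (an intersection of closed subgroup schemes is closed).

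One claim in your closing paragraph is wrong, though fortunately not needed. The pullback $\Vect^f(Y_1)\to\Vect^f(Y_2)$ along a nontrivial \'etale Galois cover $p\colon Y_2\to Y_1$ is \emph{not} fully faithful: $\Hom_{Y_2}(p^*V,p^*W)=\Hom_{Y_1}(V,W\otimes p_*\cO_{Y_2})$, which is strictly larger in general (e.g.\ a nontrivial torsion line bundle defining the cover acquires a nonzero map from $\cO$ after pullback). Galois descent gives an equivalence of $\Vect(Y_1)$ with \emph{equivariant} bundles on $Y_2$, not a full embedding into $\Vect(Y_2)$. This is precisely why the paper defines $\Hom$ in $\Vect^f_{et}(\scrX)$ as a direct limit: the Hom spaces genuinely grow up the tower. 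Your Step 1 survives because the identification $\mathrm{Aut}^\otimes(\omega)=\varprojlim_Y\mathrm{Aut}^\otimes(\omega_Y)$ for a filtered $2$-colimit of Tannakian categories with compatible fiber functors does not require the transition functors to be fully faithful -- an automorphism of $\omega$ restricts to each $\omega_Y$, and a compatible family reassembles because every morphism of the colimit category is realized at some finite level. So the ``main obstacle'' you identify is not an obstacle at all: the equivalence $\varinjlim_Y\Vect^f(Y)\simeq\Vect^f_{et}(\scrX)$ is, after your essential-surjectivity reduction, immediate from the definition of the Hom sets. You should delete the appeal to full faithfulness and replace it with this remark.
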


\begin{proof}
Let $S(X,P)$ denote the Tannaka dual of the category $(\Vect^f_{et}(X,P),\, \scrF_{\widetilde x,\scrX})$.
Let $$f\,:\,\scrY\,\longrightarrow\,\scrX$$ be a finite connected \'etale cover with $y
\,\in\, \scrY$ being the image of $\widetilde x$. Note that there is a natural functor of Tannakian categories
\begin{equation}\label{eiy}
\scrI_{\scrY}\,:\,{\rm Vect}^f(\scrY)\,\longrightarrow\,{\rm Vect}^f_{et}(\scrX)
\end{equation}
that sends an essentially finite
vector bundle $V$ on $\scrY$ to $(f,\,V)$. This functor is a full embedding. Note that $\scrF_{\widetilde{x},\scrY}
\,:=\,\scrF_{\widetilde{x},\scrX}\circ\scrI_{\scrY}$ is a fiber functor from $\Vect^f(\scrY)$ to
the category ${\rm Veck}_k$ of $k$--vector spaces. The functor $\scrI_{\scrY}$ in \eqref{eiy}
induces a homomorphism of the duals
\begin{equation}\label{ehd}
S(X,\,P)\,\longrightarrow\, \pi^N(\scrY,\,y)\, .
\end{equation}
Also the pullback $f^*$ defines a
functor $\Vect^f(\scrX)\,\longrightarrow\,\Vect^f(\scrY)$, and we have an isomorphism of the functors $I_{\scrX}$ and
$I_{\scrY}\circ f^*$. Hence the homomorphism $S(X,P)\,\longrightarrow\,\pi^N(\scrX,\,x)$, constructed
using the homomorphisms in \eqref{ehd}, factors through $S(X,P)
\,\longrightarrow\,\pi^N(\scrY, \,y)$ for every finite \'etale cover $\scrY\,\longrightarrow\, \scrX$.
Consequently, the image of $S(X,P)$ in $\pi^N(\scrX,\,x)$ lies in $K(X,P)$.
 
Let $\{f:\scrY\to\scrX, V\}$ be an object of $\Vect^f_{et}(\scrX)$. Then $V$ embeds into 
$f^*f_*V$. Also for a vector bundle $W$ on $\scrX$ the objects $\{f:\scrY\to \scrX,\,f^*W\}$ and 
$\{id:\scrX\to \scrX,\,W\}$ are isomorphic. Hence $\{f:\scrY\to\scrX, V\}$ is a subobject of 
$\scrI_{\scrX}(V)$. So an automorphism of $\scrF_{\tilde x,\scrX}$ which restricts to identity 
automorphism on the category $\Vect^f(\scrX)$ must be identity. Hence the induced homomorphism 
$S(X,P)\,\longrightarrow\, K(X,P)$ is injective.
 
Let $\Phi$ be an automorphism of the fiber functor $F_x\,:\,\Vect^f(\scrX)\,\longrightarrow\, \Vect_k$ such that
its image in $\pi_1^{et}(\scrX,\,x)$ is trivial. So $\Phi\,\in\, \pi^N(\scrY,\,y)$ for every \'etale connected
covering $\scrY\,\longrightarrow\, \scrX$ and any point $y\,\in\, Y$ lying above $x$. Therefore,
$\Phi$ is an automorphism of the fiber functor
$$F_y\,:\,\Vect^f(\scrY)\,\longrightarrow\,\Vect_k\, .$$ Let $O\,:=\,\{f:\scrY\to \scrX,V\}$ be
an object of $\Vect^f_{et}(\scrX)$; define a map
$\widetilde \Phi$ from $\scrF_{\tilde x,\scrX}(O)$ to itself to be the map $\Phi$ from $F_y(V)$ to itself. Note that 
$\widetilde \Phi$ defines an automorphism of the fiber functor $\scrF_{\widetilde{x},\scrX}$ whose restriction to $F_x$ 
is $\Phi$. Hence the natural map $S(X,P)\,\longrightarrow\, K(X,P)$ is also surjective and so
it, being injective also, is an isomorphism.
\end{proof}

\begin{cor}\label{corl}
 Let $X$ be a projective normal variety, and let $P\,\ge\, Q$ be two geometric branch data on $X$. Then we have
the following morphism of exact sequences in which all the vertical arrows are surjective:
\[\xymatrix{
1 \ar[r] & K(X,P)\ar[r]\ar[d] & \pi^N((X,P)) \ar[r]\ar[d]& \pi_1^{et}((X,P))\ar[r]\ar[d] & 1 \\
1 \ar[r] & K(X,Q)\ar[r] & \pi^N((X,Q)) \ar[r] & \pi_1^{et}((X,Q))\ar[r] & 1
}\]
\end{cor}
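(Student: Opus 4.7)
The first task is to set up the diagram. Exactness of both rows is Theorem~\ref{thm2} applied to $(X,P)$ and $(X,Q)$. The middle vertical arrow is the epimorphism $\pi^N((X,P))\to\pi^N((X,Q))$ furnished by the Proposition immediately preceding this Corollary (valid because $P\ge Q$). For the right vertical arrow, the composition $\pi^N((X,P))\to\pi^N((X,Q))\to\pi_1^{et}((X,Q))$ is surjective with target a pro-\'etale group scheme, so by the universal property of the pro-\'etale quotient $\pi^N((X,P))\to\pi_1^{et}((X,P))$ it factors uniquely through a surjection $\pi_1^{et}((X,P))\to\pi_1^{et}((X,Q))$; by construction this makes the right square commute, and the image of $K(X,P)$ under the middle arrow therefore lies in $K(X,Q)$, defining the left vertical arrow.

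The only remaining content is surjectivity of $K(X,P)\to K(X,Q)$. The plan is to invoke Theorem~\ref{thm2}, which identifies each $K$ with the Tannaka dual of $\Vect^f_{et}$. By \cite[Proposition 2.21(a)]{DM}---the same tool used in the proof of Theorem~\ref{ses-thm}---it suffices to construct a fully faithful tensor functor $\Vect^f_{et}((X,Q))\longrightarrow\Vect^f_{et}((X,P))$ compatible with the fiber functors at $\widetilde x$. I would build this functor by upgrading branch data on the source: given an object $(f:(Y,R)\to (X,Q),\,V)$, define a new branch data $R''$ on $Y$ by $R''(y)\,:=\,R(y)\cdot P(f(y))$, taking the compositum inside an algebraic closure of $\cK_{X,f(y)}$. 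Then $f:(Y,R'')\to (X,P)$ is \'etale since $R''(y)\supset P(f(y))$ by construction and $R''(y)/P(f(y))$ is unramified (unramified extensions of henselian local fields remain unramified under arbitrary base change). Since $R''\ge R$, the preceding Proposition applied on $Y$ supplies a fully faithful embedding $\Vect^f((Y,R))\hookrightarrow \Vect^f((Y,R''))$ along which $V$ is sent to a bundle $V''$, and the assignment $(f,V)\mapsto (f,V'')$ is the required functor. Its compatibility with tensor product, duals, and the fiber functor at $\widetilde x$ follows from the corresponding compatibilities of the branch-data compositum and the preceding Proposition; the fiber functor comparison specifically uses that $\widetilde x$ lies outside the branch locus, where the enlargement $R\to R''$ has no effect on stalks.

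The main obstacle will be verifying that this functor is fully faithful. Morphisms in $\Vect^f_{et}$ are direct limits over \'etale refinements, and the system of \'etale refinements over $(X,P)$ differs a priori from that over $(X,Q)$. I expect the required cofinality statement to follow from the observation that the branch-data upgrade $R\mapsto R\cdot P$ commutes with normalized fiber products, so that a common \'etale refinement of two objects of $\Vect^f_{et}((X,Q))$ upgrades to a common \'etale refinement over $(X,P)$; conversely, every \'etale refinement over $(X,P)$ should be dominated by the upgrade of an \'etale refinement over $(X,Q)$, using that $P$ is geometric and hence realized by a global cover. Once full faithfulness is established, \cite[Proposition 2.21(a)]{DM} yields the desired surjection $K(X,P)\to K(X,Q)$, completing the proof.
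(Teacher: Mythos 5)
Your proposal is correct and follows essentially the same route as the paper: the paper's entire proof consists of the observation that $\Vect^f(X,Q)$ and $\Vect^f_{et}(X,Q)$ sit as fully faithful (Tannakian) subcategories of $\Vect^f(X,P)$ and $\Vect^f_{et}(X,P)$, which by Tannaka duality gives the surjectivity of the middle and left vertical arrows, with the surjectivity of the right arrow cited from \cite{formal.orbifolds}. Your explicit branch-data-compositum construction of the embedding and the cofinality discussion merely fill in details the paper leaves implicit, and your universal-property derivation of the right-hand arrow is a harmless substitute for that citation.
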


\begin{proof}
The surjectivity of the third arrow follows from \cite{formal.orbifolds}. The surjectivity of the first (respectively, 
second) arrow follows from the observation that $\Vect^f(X,\,Q)$ (respectively, $\Vect^f_{et}(X,\,
Q)$) is a fully 
faithful subcategory of $\Vect^f(X,\,P)$ (respectively, $\Vect^f_{et}(X,\,P)$).
\end{proof}

\begin{cor}
 Let $X$ be a projective normal variety and $X^o$ be an open subset of $X$ such that $X\setminus X^o$ is a normal crossing divisor. Then we have the following morphism of exact sequences
in which all the vertical arrows are surjective:
\[\xymatrix{
1 \ar[r] & K^o \ar[r]\ar[d] & \pi^n(X^o) \ar[r]\ar[d]& \pi_1^{et}(X^o)\ar[r]\ar[d] & 1\\
1 \ar[r] & K\ar[r] & \pi^N(X) \ar[r]& \pi_1^{et}(X)\ar[r] & 1
}\]
\end{cor}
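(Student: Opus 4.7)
The plan is to realize the lower exact sequence as the special case $P\,=\,O$ of Theorem \ref{thm2}, and to obtain the upper sequence by passing to the inverse limit, over all geometric branch data $P$ on $X$ with $\BL(P)\cap X^o\,=\,\emptyset$, of the short exact sequences
$$1\,\longrightarrow\, K(X,P)\,\longrightarrow\, \pi^N((X,P))\,\longrightarrow\, \pi_1^{et}((X,P))\,\longrightarrow\,1$$
furnished by Theorem \ref{thm2}. The vertical arrows will then be the natural projections from the inverse system onto its term corresponding to the trivial branch data $O$, which is a terminal object of the indexing poset.

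For the exactness of the upper row, the key input is Corollary \ref{corl}: whenever $P\,\ge\, Q$ lie in the system, the three induced transition maps between the above short exact sequences are surjective. Hence the system satisfies a Mittag--Leffler-type condition, and since we are working with Tannaka duals of filtered unions of Tannakian categories (equivalently, inverse limits of affine proalgebraic group schemes over $k$ with epimorphic bonding maps), exactness is preserved in the limit. Setting $K^o\,:=\,\varprojlim_P K(X,P)$, we obtain the top exact sequence, whose middle term is $\pi^n(X^o)$ by Definition \ref{df:new_nori}.

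To identify the right-hand term with $\pi_1^{et}(X^o)$, I would show that the family of étale covers of formal orbifolds $(X,P)$ with $\BL(P)\cap X^o\,=\,\emptyset$ is cofinal in the family of finite étale covers of $X^o$. Given such a cover $U\,\longrightarrow\, X^o$, one passes to its unique normal finite extension $f\,:\,Y\,\longrightarrow\, X$ and takes the associated branch data $B_f$, whose branch locus lies in $X\setminus X^o$. Then $f\,:\,(Y,\,O)\,\longrightarrow\, (X,\,B_f)$ is an étale cover of formal orbifolds, so every finite quotient of $\pi_1^{et}(X^o)$ already appears as a quotient of $\pi_1^{et}((X,P))$ for some $P$ in the system; the reverse containment is immediate from the definitions. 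The normal crossing hypothesis on $X\setminus X^o$ enters here to guarantee that the branch data $B_f$ produced in this way are in fact geometric and that they form a cofinal family, yielding $\varprojlim_P \pi_1^{et}((X,P))\,=\,\pi_1^{et}(X^o)$.

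The surjectivity of the middle and right vertical arrows is immediate from the fact that the bonding maps of the inverse system are surjective, which forces each projection from the limit onto a term to be surjective; the surjectivity of $K^o\,\longrightarrow\, K$ then drops out of the map of short exact sequences by a standard diagram chase together with the Five Lemma. The step I expect to be the main obstacle is the careful justification of the identification $\varprojlim_P \pi_1^{et}((X,P))\,=\,\pi_1^{et}(X^o)$, and in particular pinning down exactly how the normal crossing hypothesis is used to make the family of orbifold étale covers cofinal among all finite étale covers of $X^o$.
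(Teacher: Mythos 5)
Your proposal is correct and follows essentially the same route as the paper, whose entire proof is the one-line observation that the corollary follows from Corollary \ref{corl} by taking $Q$ to be the trivial branch data and passing to the inverse limit over branch data $P$ with branch locus in $X\setminus X^o$. Your additional care about exactness being preserved in the limit of epimorphic systems and about the identification $\varprojlim_P \pi_1^{et}((X,P))\,=\,\pi_1^{et}(X^o)$ simply makes explicit what the paper leaves implicit.
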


\begin{proof}
 This follows from Corollary \ref{corl} by taking $Q$ to be the trivial branch data and taking the inverse limit over the branch data $P$ whose branch locus lie in $X\setminus X^o$. 
\end{proof}

\begin{remark}
Let $K^t$ be the kernel of the epimorphism $\pi^N(X^o) \,\longrightarrow\,\pi_1^n(X^o)$. It is not clear to us whether $K^t$ is 
trivial. But the image of $K^t$ in $K$ under the natural homomorphism induced from $\pi^N(X^o)\,\longrightarrow\,\pi^N(X)$ is trivial, since 
$\pi^N(X^o)\,\longrightarrow\,\pi^N(X)$ factors through $\pi_1^n(X^o)$.
\end{remark}

\begin{example} Let $X\,=\,\PP^1$, $Q\,=\,O$, $P$ to be tame ramification at four points of $\PP^1$ of order 2 
(i.e., characteristic $p\ne 2$). Let $E\,\longrightarrow\, X$ be a $\Z/2\Z$--cover by an elliptic curve of 
$X\,=\,\PP^1$. Let $V$ be a non-trivial Frobenius-trivial $\Z/2\Z$--equivariant bundle on the elliptic curve. 
This can be constructed by starting with a non-trivial Frobenius-trivial bundle $L$ on $E$ (for instance take 
the bundle associated to $\mu_p$ torsor which arises from the kernel of the Frobenius morphism). Let 
$V\,=\,L\oplus g^*L$ where $g\,\in\, \Z/2\Z$ is the nontrivial element. This shows that $K(\PP^1,P)$ is 
non-trivial but $K(\PP^1,Q)$ is trivial (as $\pi^N((\PP^1,Q))\,=\,\pi^N(\PP^1)$ is trivial). Hence 
$K(X,P)\,\longrightarrow\, K(X,Q)$ is not an isomorphism. In particular, the map $K^o\,\longrightarrow\, K$ in 
the above corollary need not be an isomorphism. This also demonstrates that
$\pi^N((X,P))\,\not\cong\,\pi^N(X)\times_{\pi_1^{et}(X)}\pi_1^{et}((X,P))$ in general.
\end{example}

\end{document}